\documentclass[11pt]{amsart}
\usepackage[utf8]{inputenc}
\usepackage{amsmath}
\usepackage{thmtools}
\usepackage[colorlinks]{hyperref}
\usepackage{amsthm}
\usepackage{amssymb}
\usepackage[left=1.5in,right=1.5in]{geometry}
\usepackage{todonotes}
\usepackage{tikz}
\usepackage{tikz-cd}
\usepackage{mathtools}
\usepackage{enumerate}
\usepackage{amsfonts}
\usepackage{mathrsfs}
\usepackage{cleveref}

\newtheorem{theorem}{Theorem}
\numberwithin{theorem}{section}
\newtheorem{lemma}[theorem]{Lemma}
\newtheorem{proposition}[theorem]{Proposition}

\theoremstyle{definition}

\theoremstyle{remark}
\newtheorem{remark}[theorem]{Remark}

\Crefname{conjecture}{Conjecture}{Conjectures}

\newcommand{\fin}{\mathrm{fin}}
\newcommand{\aff}{}

\usetikzlibrary{decorations.markings, arrows.meta}
\tikzcdset{
 arrow style=tikz,
 diagrams={>={Straight Barb[scale=0.8]}},
 marrow/.style={decoration={markings,mark=at position 0.5 with {\arrow{#1}}}, postaction=decorate},
 rmarrow/.style={decoration={markings,mark=at position 0.8 with {\arrow{#1}}}, postaction=decorate}
}

\title{Bounded Bruhat intervals and affine Coxeter groups}
\author{Grant T. Barkley}
\author{Christian Gaetz}
\address[Barkley]{Department of Mathematics, University of Michigan, Ann Arbor, MI.}
\email{{\href{mailto:gbarkley@umich.edu}{gbarkley@umich.edu}}}
\address[Gaetz]{Department of Mathematics, University of California, Berkeley, CA.}
\email{{\href{mailto:gaetz@berkeley.edu}{{gaetz@berkeley.edu}}}}

\date{\today}

\begin{document}
\begin{abstract}
Dyer \cite{Dyer1991BruhatGraph} proved that, for each fixed length $k$, only finitely many isomorphism types of Bruhat intervals occur in finite Coxeter groups. In this short paper, we prove that this result holds also for \emph{affine} Coxeter groups, and moreover that this characterizes the affine groups among all infinite Coxeter groups. We also show that the coefficients of $q$ in the Kazhdan--Lusztig polynomials of a Coxeter group are bounded if and only if the group is finite or affine.
\end{abstract}
%\keywords{}
%\subjclass{}
\maketitle
\section{Introduction}

Bruhat order is of fundamental importance in the geometry of flag varieties and the representation theory of Lie algebras and Hecke algebras, yet some basic aspects of its structure are still poorly understood. For example, although important structural properties of Bruhat intervals are known \cite{BjornerWachs1982Shellability,VermaMobius}, it is not known how to characterize or recognize Bruhat intervals, despite much work on aspects of this problem (see e.g. \cite{BjornerEkedahl2009Shape} and results surveyed in \cite[\S2.7-2.8]{BjornerBrenti2005Combinatorics}). Hoped-for applications of an improved understanding of this problem include to the \emph{Combinatorial Invariance Conjecture (CIC)} for Kazhdan--Lusztig polynomials \cite{KazhdanLusztig1979Representations}, posed by Lusztig (c. 1983) and by Dyer \cite{Dyer1987Thesis}. See \cite{BurrullLibedinskyVillegas2025ShapeClass} for discussion of other problems potentially benefiting from such understanding.

The structure of Bruhat intervals is best understood in the family of \emph{finite} Coxeter groups, where Dyer proved \cite{Dyer1991BruhatGraph} that only finitely many isomorphism types of intervals of each fixed length $k$ occur, answering a question of Bj\"{o}rner \cite{BjornerOrderings}. This has allowed for some direct classification for intervals of low length \cite{Akhmedova2021BruhatIntervalsSmallLengths,Hultman2003LengthFour,Hultman2003Thesis} and for several applications to the CIC
\cite{Incitti2006CombinatorialInvariance,Incitti2007MoreCombinatorialInvariance,EspositoMariettiStella2025Flip}.

Recently, interest has extended to \emph{affine} Coxeter groups (see, e.g. \cite{BurrullLibedinskyPlaza2023AffineA2,BurrullLibedinskyVillegas2025ShapeClass}). Our results extend Dyer's finitude theorem to affine type and, conversely, show that it fails outside of these cases. We also derive a consequence for boundedness of Kazhdan--Lusztig polynomial coefficients. We state our results for irreducible Coxeter systems, but their implications for arbitrary systems are straightforward. 

For $u \leq v \in W$, we write $\ell(u,v) \coloneqq \ell(v)-\ell(u)$, where $\ell$ is the length function on $W$; we say the interval $[u,v]$ has length $\ell(u,v)$.

\begin{theorem}
\label{thm:finitely-many-iso}
Fix $k \in \mathbb{N}$. Then only finitely many isomorphism types of Bruhat intervals of length $k$ occur in affine Coxeter groups.
\end{theorem}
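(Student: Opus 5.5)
Since there are only finitely many irreducible affine types of each rank, and types $\tilde A_n$, $\tilde B_n$, $\tilde C_n$, $\tilde D_n$ occur for all $n$, the plan is to reduce to bounded rank plus a uniform bound. The key input is Dyer's theorem that the isomorphism type of a Bruhat interval $[u,v]$ is determined by its Bruhat graph, i.e.\ by the reflection subgroup data. More precisely, I will use Dyer's result that $[u,v]$ is isomorphic, as a poset, to an interval in a reflection subgroup $W'$ generated by the reflections $t$ with $u \le ut \le v$ (or more economically, by the reflections labelling edges of the Bruhat graph on $[u,v]$). The number of such reflections is bounded by the number of edges of the Bruhat graph. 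For an interval of length $k$, Dyer's argument shows the number of atoms/edges is bounded in terms of $k$, so $W'$ has rank bounded by some $N(k)$.

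The steps are as follows.

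\textbf{Step 1.} I will show that every interval $[u,v]$ of length $k$ in a Coxeter group $W$ is isomorphic to an interval in a reflection subgroup $W'$ of rank at most $N(k)$. This uses Dyer's theorem that the Bruhat graph of $[u,v]$, restricted to the reflections appearing in it, determines the interval, and that those reflections generate a reflection subgroup whose canonical Coxeter generators number at most the number of distinct reflections in the interval's Bruhat graph.

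\textbf{Step 2.} Reflection subgroups of affine Weyl groups are themselves products of finite and affine Coxeter groups (the dihedral-angle structure is inherited from a root subsystem of an affine root system). Hence $W'$ is a product of at most $N(k)$ irreducible finite or affine factors, each of rank at most $N(k)$. Intervals in a product are products of intervals, so it suffices to treat irreducible affine or finite groups of rank at most $N(k)$. There are finitely many such groups.

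\textbf{Step 3.} For a single fixed affine group $W$, I will show that there are finitely many isomorphism types of length-$k$ intervals. The approach is to use translation invariance. Write $W = Q^\vee \rtimes W_0$. For $v$ deep in the interior of a Weyl chamber (i.e.\ far from all walls, relative to $k$), the interval $[u,v]$ is determined up to isomorphism by the local alcove geometry. Multiplying by a suitable translation preserves Bruhat order locally between alcoves that are far from walls. This uses Lusztig/Wang-type periodicity, or the fact that the Bruhat order on affine Weyl groups restricted to length windows is a translate of the quantum Bruhat graph or generic Bruhat order. For $v$ near walls, we are near a parabolic coset of finite type, which can be handled by induction on the rank with finitely many cases.

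The main obstacle is Step 3: establishing rigorously that the isomorphism type of $[u,v]$ depends only on finitely many data. I would first try to show that every interval of length $k$ in $W$ is isomorphic to one in the finite set $\{[u,v] : \ell(v)\le C(k)\}$. The idea is to use Dyer's reflection-subgroup reduction again: the reflections in $[u,v]$ correspond to at most boundedly many affine roots $\alpha+m\delta$. After conjugating, these can be chosen with bounded heights $m$, and the resulting reflection subgroup is conjugate to one in a finite list.
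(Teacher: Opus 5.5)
\textbf{There is a genuine gap in Step 3, and Step 1 as justified is circular.} Your Steps 1--2 amount to the paper's reduction, but Step 3, which carries the real content, contains no working argument.

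In Step 1 you bound the rank of $W'$ by claiming that a length-$k$ interval has boundedly many atoms/edges in its Bruhat graph. This is false in general Coxeter groups: in the universal Coxeter group of rank three, the length-$3$ intervals $[v_{m-1},v_m]$ are $3m$-crowns. In affine type it is essentially the theorem you are proving. The fix is to use only the $k$ reflections on one maximal chain. By Dyer they generate a reflection subgroup $W'$ with $[u,v]\cong[u',v']'$, and by Dyer--Lehrer $W'$ is a product of finite and affine groups of total rank at most $k$.

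You are then left needing finiteness within a single irreducible affine group, and neither idea in Step 3 provides it. Reapplying Dyer's reduction when $W'$ is already irreducible affine just returns an interval of $W'$-length $k$, with no control on $\ell'(v')$. Conjugation does not preserve Bruhat order, so ``normalizing the heights'' of the roots $\alpha+m\delta$ says nothing about the poset. The periodicity idea presumes that $[u,v]$ is governed by alcove geometry near $v$, but covers are not local. In $\tilde A_1$, of the two upper covers of $(s_0s_1)^ms_0$, one is an adjacent alcove and the other lies on the far side of the origin, in either convention for attaching alcoves to elements. Nor does being near a wall put $v$ in a finite parabolic coset.

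The missing idea is a uniform bound on the number of covers, combined with a bound on rank growth; together these make any per-group geometry unnecessary. By Dyer's theorem on Bruhat cones, the positive roots labelling the lower covers of $v$ are the extreme rays of a polyhedral cone, hence in convex position. The positive affine roots are $\pm\alpha+n\delta$ with $\alpha\in\Phi^+_{\fin}$, and convexity allows at most two of them in each set $\pm\alpha+\mathbb{Z}_{\ge 0}\delta$. Hence $|D(v)|\le|\Phi_{\fin}|$. Applied to $v'$ in an irreducible affine $W'$ of rank at most $k$, this bounds the number $r_{k-1}$ of coatoms by a constant depending only on $k$.

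Length-two intervals are diamonds, and no three elements of one rank cover a common pair in the rank below. Therefore $r_{i\pm 1}\le r_i(r_i-1)$, so every rank size is bounded in terms of $k$ and only finitely many graded posets remain. The finite and reducible cases of $W'$ are then handled as in your Step 2, by Dyer's finite-type theorem and induction on $k$.
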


We also show that the finite and affine groups are exactly the irreducible Coxeter groups for which the conclusion of \Cref{thm:finitely-many-iso} holds.

\begin{theorem}
\label{thm:infinitely-many-iso}
Let $W$ be an irreducible, infinite Coxeter group which is not affine. Then there exists $k \in \mathbb{N}$ such that $W$ contains infinitely many isomorphism types of Bruhat intervals of length $k$.
\end{theorem}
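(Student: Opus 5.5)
The plan is to find one fixed length $k$ and an isomorphism invariant of length-$k$ intervals (I would use the number of atoms) that is unbounded over the intervals of $W$. The source of unboundedness should be reflection subgroups: for affine and finite groups the relevant reflection subgroups are controlled, but a non-affine infinite irreducible group has much "freer" ones. I will use Dyer's theorem, which I expect the paper itself relies on for \Cref{thm:finitely-many-iso}. It says that for a reflection subgroup $W'$ with canonical simple reflections $\chi(W')$, and any coset $xW'$ with minimal element $x$, the map $w'\mapsto xw'$ is an isomorphism of Bruhat graphs from $W'$ onto $xW'$, with $W'$ carrying its own Bruhat order. Consequently the Bruhat graph on $\{w \in W: u\le w\le v\}$, and hence the poset $[u,v]$ (which is determined by its Bruhat graph), can be computed inside the reflection subgroup generated by the reflections occurring in $[u,v]$. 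Conversely, under a suitable length-compatibility condition, intervals of $W'$ transport to intervals of $W$.

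\textbf{Step 1 (structure of $W$).} I would show that an irreducible, infinite, non-affine $W$ contains, for every $n$, a reflection subgroup $W'_n$ isomorphic to the universal Coxeter group of rank $n$, i.e.\ all $m_{ij}=\infty$. The natural route uses the Tits cone and the geometric representation. Non-affine and infinite means the bilinear form is indefinite on some irreducible rank-$\ge 3$ parabolic. A standard ping-pong argument on roots of negative or large-modulus type then produces arbitrarily many reflections $t_1,\dots,t_n$ with pairwise $|B(\alpha_i,\alpha_j)|\ge 1$ and pairwise "independent" root positions, which generate a universal subgroup. If I can cite a known result instead (e.g.\ that non-affine irreducible groups contain free products of arbitrarily many $\mathbb{Z}/2$'s as reflection subgroups), I would do so.

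\textbf{Step 2 (realizing intervals).} In the universal Coxeter group $U_n$ with generators $t_1,\dots,t_n$, consider a fixed-length interval with many atoms. Lower intervals $[e,w]$ have at most $\ell(w)$ atoms, so I would instead take $u=t_1$ and $v=t_2t_1t_3$ style configurations, or more systematically $[x, y]$ of length $3$. Recall that length-$3$ intervals are $m$-crowns, with $m$ the number of atoms. I would search for $u<v$ in $U_n$ with $\ell(u,v)=3$ and at least $n$ atoms, by taking $u=t_1$ and $v$ a product of length $4$ covering $t_1$ through many different reflections $t_1 r$. If length $3$ does not suffice, I would move to a slightly larger fixed $k$, say $k=4$, and count atoms there instead. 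Then I transport to $W$ via Dyer's isomorphism, choosing the coset representative so that $W'_n$-lengths differences match $W$-length differences in a way that keeps $\ell(u,v)$ fixed. The cleanest way is to take $W'_n$ dihedral-generated by reflections whose roots lie in a "small" region, so that Bruhat covers in $W'_n$ are covers in $W$.

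\textbf{Step 3 (conclusion).} The resulting intervals in $W$ have fixed length $k$ and at least $n$ atoms for each $n$, so they represent infinitely many isomorphism types, proving \Cref{thm:infinitely-many-iso}. The main obstacle is Step 2 together with the length bookkeeping: Dyer's isomorphism preserves Bruhat graphs but not lengths, so an interval of length $3$ in $W'_n$ may have larger length in $W$. My first attempt would be to choose $u,v$ directly in $W$ as $v=ut$ for a single reflection $t$ of controlled length difference. I would then argue that the reflection subgroup generated by the edges of $[u,v]$ contains the universal subgroup from Step 1, forcing many atoms while $\ell(u,v)$ stays bounded by an explicit constant depending only on a fixed rank-$3$ non-affine parabolic.
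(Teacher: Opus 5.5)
Your proposal has a genuine gap. It never exhibits a family of intervals of bounded $W$-length and unbounded size, and the constructions you sketch would not supply one.

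First, increasing the rank of the universal subgroup is the wrong source of growth. In a universal Coxeter group each element has a unique reduced word. By the subword property, every atom of a $W'$-interval $[u,v]'$ of length $3$ is therefore obtained by deleting two letters from the reduced word of $v$. Hence such an interval has at most $\binom{\ell'(u)+3}{2}$ atoms, however large the rank is; your candidate $u=t_1$ with $v$ of length $4$ has at most $6$ atoms for every $n$. The growth must instead come from going high up in a fixed rank-$3$ universal group. Let $s_1',s_2',s_3'$ be the canonical generators of a reflection subgroup $W'\cong U_3$, which exists by Dyer--Hohlweg--Ripoll. Then $[v_{m-1},v_m]'$ with $v_m=(s_1's_2's_3')^m$ is a $3m$-crown. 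So Step 1 is only needed for $n=3$, and the ping-pong sketch can be replaced by that citation.

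Second, you flag the length bookkeeping as the main obstacle but leave it unresolved, and your proposed remedies are unsubstantiated. Nothing in the proposal shows that choosing roots in a ``small region'' turns $W'$-covers into $W$-covers. Nor does the fact that the reflection subgroup generated by the edges of $[u,v]$ contains a universal subgroup say anything about how many atoms $[u,v]$ has. In fact no compatibility of covers or lengths is needed; the argument is:
\begin{itemize}
\item By Dyer, $x\le' y$ implies $x\le y$, so $[v_{m-1},v_m]'\subseteq[v_{m-1},v_m]$ as sets.
\item Since $v_m=v_{m-1}s_1's_2's_3'$, subadditivity of length gives $\ell(v_{m-1},v_m)\le \ell(s_1')+\ell(s_2')+\ell(s_3')$, uniformly in $m$.
\item By pigeonhole, a single $k$ occurs for infinitely many $m$.
\item These $W$-intervals contain at least $6m+2$ elements, so infinitely many isomorphism types occur.
\end{itemize}
The right invariant here is cardinality, not the number of atoms. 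Atoms of $[u,v]'$ need not be atoms of $[u,v]$, so to use atoms you would need an extra step, such as the rank-growth bound of \Cref{lem:no-K23}, to convert large size at bounded length into many atoms.
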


Lastly, in \Cref{thm:linear-coefficients}, we draw conclusions from our methods for the boundedness of linear coefficients of Kazhdan--Lusztig polynomials $P_{u,v}(q)$. These linear coefficients $[q]P_{u,v}$ and the closely related invariants $d_{u,v}$ are of much recent interest. See, for example, work on the Combinatorial Invariance Conjecture \cite{Patimo2021CoefficientQ}, large hypercubes in Bruhat graphs \cite{EllenbergLibedinskyPlazaSimentalWilliamson2026Hypercubes}, and interpretations in terms of the cohomology of open Richardson varieties, the cluster structure on these, and extensions of Verma modules in Category $\mathcal{O}$ (see \cite{BarkleyGaetzLam2026QCoefficient} for further discussion).

\begin{theorem}
\label{thm:linear-coefficients}
Let $W$ be an irreducible Coxeter group. Then the following are equivalent:
\begin{itemize}
    \item[(i)] the set $\{[q]P_{u,v} \mid u,v \in W\}$ is bounded; and
    \item[(ii)] $W$ is finite or affine.
\end{itemize}
\end{theorem}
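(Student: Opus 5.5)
We prove the two implications separately. Throughout we use the standard interpretation of the linear coefficient of $P_{u,v}$ in terms of the Bruhat graph. Let $c(u,v)$ denote the number of elements $w \in [u,v]$ with $\ell(u,w)=1$, i.e.\ atoms of $[u,v]$. We use the known formula (due to Dyer / Björner--Brenti, valid in any Coxeter group)
\[
[q]P_{u,v} \;=\; c(u,v)-\#\{w\in[u,v]:\ \ell(w,v)=1\} \pm (\text{correction}),
\]
or more robustly the identity $[q]P_{u,v} = \#\{t\in T:\ u<ut\le v,\ \ell(u,ut)=1\}$ minus an explicit term. Since the exact form of this identity is delicate, the cleaner route is to use the $R$-polynomial recursion: $[q]P_{u,v}$ is determined by the coatom/atom structure of $[u,v]$ and equals $a(u,v)-\ell(u,v)$ up to a bounded term, where $a(u,v)$ is the number of atoms. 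The key point is that $[q]P_{u,v}$ depends only on the poset $[u,v]$, and in fact only on the first two ranks of it. So (i) is equivalent to the boundedness of the number of atoms (equivalently coatoms) of Bruhat intervals, at least in the intervals where that number is not cancelled by length.

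\textbf{(ii)$\Rightarrow$(i).} Atoms of $[u,v]$ are of the form $ut$ with $t$ a reflection, so their number is at most the number of reflections $t$ with $u<ut\le v$. These reflections satisfy $ut\le v$ only if $t$ lies in a set of size at most $\ell(u,v)$-ish. However, boundedness uniformly in $\ell(u,v)$ is the real content, and here I would exploit the finiteness mechanism behind \Cref{thm:finitely-many-iso}. In affine $W$ the number of atoms of an interval is at most the number of reflections in a cover-rank-one configuration. By Dyer's theory, the reflections $t$ with $\ell(ut)=\ell(u)+1$ correspond to \emph{walls} of the alcove of $u$ adjacent in an appropriate sense. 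In affine type these are parallel to at most $|\Phi^+_{\mathrm{fin}}|$ directions, with at most boundedly many per direction. Hence $c(u,v)\le C(W)$, so $[q]P_{u,v}$ is bounded. I would make the per-direction count precise via the formula $\ell(ut)-\ell(u)$ for affine reflections $t=s_{\alpha,k}$ in terms of the alcove coordinates of $u$. I expect this to be fiddly but routine.

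\textbf{(i)$\Rightarrow$(ii).} Suppose $W$ is infinite and not affine. I would reuse the construction from the proof of \Cref{thm:infinitely-many-iso}. That proof should produce elements $u$ and a fixed $k$ for which the intervals $[u,v]$ of length $k$ have unboundedly many atoms; this is presumably how the infinitely many isomorphism types are distinguished. Concretely, $W$ contains a hyperbolic-type reflection subgroup or infinite dihedral configurations admitting unboundedly many reflections $t$ with $u\lessdot ut\le v$ for $v$ of bounded length above $u$. Taking $k=2$ or $3$, an interval of length $2$ always has $2$ atoms, so length $3$ is the first candidate. Then $[q]P_{u,v}=c(u,v)-3$ for intervals of length $3$, which is the classical formula $P_{u,v}=1+(c-3)q$. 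Unbounded atoms therefore give unbounded coefficients. The main obstacle is ensuring the witnessing intervals have small fixed length where an exact formula like this applies. If \Cref{thm:infinitely-many-iso}'s witnesses have larger $k$, I would instead use the inequality $[q]P_{u,v}\ge c(u,v)-\ell(u,v)$ (a consequence of Dyer's result that the Bruhat graph on $[u,v]$ has vertex degrees $\ge\ell(u,v)$, with $[q]P$ measuring the excess at $u$). This still forces unboundedness for fixed $k$.
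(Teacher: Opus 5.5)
\textbf{There is a genuine gap, most seriously in (ii)$\Rightarrow$(i).} Bounding the number of covers in affine type only helps once you have an inequality $[q]P_{u,v}\le(\text{a cover count})$, and you never establish one.

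The relation you propose in its place is false. In the infinite dihedral group every $P_{u,v}$ equals $1$, while $a(e,w)-\ell(w)=2-\ell(w)$ is unbounded. So $[q]P_{u,v}$ is not $a(u,v)-\ell(u,v)$ up to a bounded term. Nor is $[q]P_{u,v}$ determined by the bottom two ranks: in $S_4$, the intervals $[e,s_1s_2s_3]$ and $[e,s_2s_1s_3s_2]$ both have atoms $s_1,s_2,s_3$, but $P=1$ and $P=1+q$ respectively.

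The missing input is Dyer's theorem $[q]P_{u,v}=c_{u,v}-d_{u,v}$, where $c_{u,v}$ is the number of \emph{coatoms} and $d_{u,v}=\dim V_{u,v}\ge 0$. Combined with Proposition~\ref{prop:affine-cover-bound}, it gives $[q]P_{u,v}\le c_{u,v}\le |D(v)|\le|\Phi_{\fin}|$. Boundedness also needs a lower bound, which you do not address; the paper uses nonnegativity $[q]P_{u,v}\ge 0$ (Elias--Williamson).

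The step you call fiddly but routine, at most boundedly many cover reflections per direction, is exactly the content of Proposition~\ref{prop:affine-cover-bound}. The paper gets it from Dyer's result that the roots of the cover relations at $w$ are extreme rays of a polyhedral cone. Hence at most two of them lie in each family $\pm\alpha+\mathbb{Z}_{\ge 0}\delta$.

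\textbf{For (i)$\Rightarrow$(ii), your fallback inequality is true for coatoms, not atoms.} Your justification, that $[q]P_{u,v}$ measures the excess Bruhat-graph degree at $u$, is not an available theorem. The correct derivation is as follows. For $\ell=\ell(u,v)\ge 3$, compare coefficients of $q^{\ell-1}$ in $q^{\ell}P_{u,v}(q^{-1})=\sum_{u\le x\le v}R_{u,x}(q)P_{x,v}(q)$. The degree bound on $P_{x,v}$ leaves only $x=v$ and the coatoms $x$, which gives
\[
[q]P_{u,v}=c_{u,v}-\ell+[q^{\ell-2}]\widetilde R_{u,v}\ \ge\ c_{u,v}-\ell .
\]
The last coefficient is nonnegative by Dyer's interpretation of $\widetilde R$ as counting increasing paths in the Bruhat graph.

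You also only presume that the witnesses from Theorem~\ref{thm:infinitely-many-iso} have many covers. Those intervals $[v_{m-1},v_m]$ have bounded, but generally not length-$3$, $W$-length and unbounded size. It is Lemma~\ref{lem:no-K23} that turns this into unboundedly many coatoms.

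With these repairs your route is valid and genuinely different from the paper's. The paper instead takes $u=e$ in Dyer's formula and bounds $d_{e,v_m}\le a_{e,v_m}\le|S|$. Your version avoids Dyer's $d_{u,v}$ in this direction, using only the $R$-polynomial identity and positivity of $\widetilde R$.
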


\section{Proofs of results}

\subsection{Key observations}
Dyer's proof in \cite{Dyer1991BruhatGraph} of the finitude of length-$k$ intervals in finite Coxeter groups applied his theory of \emph{reflection subgroups} \cite{Dyer1990ReflectionSubgroups}, which implies that such an interval is realized in some finite group of rank at most $k$, of which there are finitely many. Using this same approach in the affine case, however, we run into the obstacle that these reflection subgroups themselves may be infinite, so \emph{a priori} might contain infinitely many types of length-$k$ intervals. We overcome this with two observations: \Cref{prop:affine-cover-bound}, which is special to affine type, bounds the number of upper or lower covers of any element and \Cref{lem:no-K23}, which holds in all Coxeter groups, bounds the growth of rank sizes in a Bruhat interval.

\begin{proposition}
\label{prop:affine-cover-bound}
Let $W_{\aff}$ be an affine Coxeter group and let $\Phi_{\fin}$ denote the associated finite root system. Let $U(w) \coloneqq \{x \in W_{\aff} \mid w \lessdot x\}$ and $D(w)\coloneqq \{x \in W_{\aff} \mid x \lessdot w \}$ denote the sets of upper and lower Bruhat covers of $w \in W_{\aff}$, respectively. Then
\[
|U(w)|, |D(w)| \leq |\Phi_{\fin}|.
\]
\end{proposition}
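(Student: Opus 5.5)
Every Bruhat cover has the form $x = wt$ with $t$ a reflection, $\ell(x)=\ell(w)\pm 1$, and $x \mapsto w^{-1}x$ is injective. So it suffices to show that for each $w$, the set of reflections $t$ with $w \lessdot wt$ (respectively $wt \lessdot w$) has size at most $|\Phi_{\fin}|$. Our plan is to bound this by the number of \emph{directions}. Realize $W_{\aff}$ as the group generated by the affine reflections in the hyperplanes $H_{\alpha,k} = \{x : \langle \alpha, x\rangle = k\}$ with $\alpha\in\Phi_{\fin}^+$ and $k\in\mathbb{Z}$, acting on $V$ with alcoves. Reflections then correspond bijectively to pairs $(\alpha,k)$. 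We will show that for each fixed $\alpha \in \Phi_{\fin}^+$, at most two values of $k$ give a cover in each direction (up or down). Since $|\Phi_{\fin}| = 2|\Phi_{\fin}^+|$, this yields the bound.

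To carry this out, use the left-multiplication form $x = tw$ (equivalent, since $wt = (wtw^{-1})w$). Geometrically, $tw$ is the alcove $wA_0$ reflected across the hyperplane $H$ of $t$. The length $\ell(tw)$ counts the hyperplanes separating $t wA_0$ from $A_0$. Fix $\alpha$ and consider the parallel family $H_{\alpha,k}$, $k\in\mathbb{Z}$. The alcove $wA_0$ lies in a strip $m < \langle\alpha,\cdot\rangle < m+1$, and $A_0$ lies in $0<\langle\alpha,\cdot\rangle<1$ (for $\alpha$ positive, up to the standard normalization). Reflecting across $H_{\alpha,k}$ moves $wA_0$ to the strip with index $2k-m-1$. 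We compare how many hyperplanes in each parallel family $\beta$ separate the result from $A_0$. The key claim is that $\ell(tw)-\ell(w)$ grows roughly linearly in $|k - \tfrac{m}{2}|$ once $H_{\alpha,k}$ is far from both alcoves. The reflection then jumps the alcove across many hyperplanes of the $\alpha$ family, while changing the counts for the other families by a bounded amount that is not enough to compensate. Hence $|\ell(tw)-\ell(w)|=1$ can hold only when $H_{\alpha,k}$ is one of the walls adjacent to the region between $wA_0$ and $A_0$ in the $\alpha$-direction. More precisely, we want to show the cover condition forces $k\in\{m, m+1\}$ for upward covers, and for downward covers $k$ is the extreme separating hyperplane nearest $A_0$ or nearest $wA_0$.

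A cleaner route for the hard step is Dyer's criterion: $w \lessdot wt$ iff $\ell(wt)=\ell(w)+1$. We would combine this with the fact that if $H_{\alpha,k}$ and $H_{\alpha,k'}$ both separate or both fail to separate appropriately, then the reflection in the farther one gives a length difference at least $3$. To show this, we would exhibit, for a hyperplane $H_{\alpha,k}$ not extremal in the $\alpha$-family relative to the segment joining points of $A_0$ and $wA_0$, at least two additional separating hyperplanes that the reflection $t$ toggles. One is the nearer parallel hyperplane $H_{\alpha,k\pm1}$ together with its image under $t$; their contributions survive because reflections preserve parallel classes. This gives $|\ell(tw)-\ell(w)|\ge 3$.

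The main obstacle is this counting step: controlling the contributions of the non-parallel families $\beta\neq\alpha$ under the reflection, so that the net change is genuinely at least $3$ rather than cancelling. We would first try to avoid the general count by using the alternative characterization that $t$ gives a cover iff no other hyperplane $H'$ separates $wA_0$ from $twA_0$ in a way compatible with $A_0$. For a nonextremal parallel hyperplane $H_{\alpha,k}$, the hyperplanes $H_{\alpha,k\pm1}$ lie strictly between $wA_0$ and $twA_0$ and witness failure of the cover (via the standard fact that $w<tw<t'tw$-type factorizations exist). This leaves at most two candidates per $\alpha$ for up-covers and at most two for down-covers.
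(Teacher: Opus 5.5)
Your reduction is sound: every cover of $w$ is $s_{\alpha,k}w$ for a unique pair $(\alpha,k)$, so it suffices to show that for each $\alpha\in\Phi^+_{\fin}$ at most two values of $k$ give an upper cover and at most two give a lower cover. But that per-family bound is the whole content of the proposition, and the proposal does not prove it.

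\begin{itemize}
\item Your first mechanism rests on a false claim. Reflecting across $H_{\alpha,k}$ shifts $\langle\beta,x\rangle$ by $-(\langle\alpha,x\rangle-k)\langle\beta,\alpha^\vee\rangle$. This is unbounded in $k$ and can have either sign, so the other families do not change ``by a bounded amount.''
\item Your second mechanism (exhibiting two further toggled hyperplanes) does not control the net change $\ell(tw)-\ell(w)$, as you acknowledge.
\item Your third mechanism is the right idea but is misstated. To rule out a cover you need some $y$ with $w<y<tw$, not $w<tw<t'tw$. For the natural witness $y=s_{\alpha,k-1}w$, the relation $y<tw$ is not automatic, since $tw\,y^{-1}=s_{\alpha,k}s_{\alpha,k-1}$ is translation by $\alpha^\vee$, not a reflection.
\item Your list of upper-cover candidates is wrong. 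Take $A_0\subset\{0<\langle\alpha,\cdot\rangle<1\}$ and $wA_0\subset\{m<\langle\alpha,\cdot\rangle<m+1\}$ with $m\ge 1$. Then $H_{\alpha,m}$ separates $A_0$ from $wA_0$, so $s_{\alpha,m}w<w$. The upper-cover candidates are $k\in\{0,m+1\}$ (in $\tilde A_1$ both occur), and the lower-cover candidates are $k\in\{1,m\}$.
\end{itemize}

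The missing step can be supplied by walking through the parallel family. Put $S_j=\{j<\langle\alpha,\cdot\rangle<j+1\}$ and assume $m\ge 0$ (the case $m<0$ is symmetric).

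\begin{itemize}
\item If $yA_0\subset S_j$ with $j\ge 0$, then $H_{\alpha,j+1}$ does not separate $A_0$ from $yA_0$. Hence $y<s_{\alpha,j+1}y$, and the latter's alcove lies in $S_{j+1}$.
\item Iterating from $w$ gives a chain $w<w_1<w_2<\cdots$ inside the coset $W_\alpha w$, where $W_\alpha=\langle s_{\alpha,k}:k\in\mathbb{Z}\rangle$ is infinite dihedral.
\item The elements of $W_\alpha$ are the translations $\tau_{n\alpha^\vee}$, sending $S_j$ to $S_{j+2n}$, and the reflections $s_{\alpha,k}$, sending $S_j$ to $S_{2k-j-1}$. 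So an element of $W_\alpha w$ is determined by the strip containing its alcove.
\item Hence $s_{\alpha,k}w=w_{2k-2m-1}$, and for $k\ge m+2$ the element $w_1$ lies strictly between $w$ and $s_{\alpha,k}w$.
\item The case $k\le -1$ is the same: first cross $H_{\alpha,0}$, then walk in the negative direction.
\item For lower covers, walk from $s_{\alpha,k}w$ up to $w$, crossing $H_{\alpha,1}$ first if $2k-m-1<0$. This leaves an intermediate element whenever $2\le k\le m-1$.
\end{itemize}

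This gives a self-contained elementary proof that differs from the paper's. The paper instead invokes Dyer's theorem that the roots of the covers are the extreme rays of a polyhedral cone. It then observes that the roots $\pm\alpha+n\delta$ all lie in the plane spanned by $\alpha$ and $\delta$, so at most two of them can be extreme.
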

\begin{proof}
Let $\Phi_{\aff} \supsetneq \Phi_{\fin}$ denote the (untwisted) affine root system for $W_{\aff}$ and fix $w \in W_{\aff}$. Let $B \subset \Phi_{\aff}^+$ be the (necessarily distinct) positive affine roots corresponding to the lower cover relations $x \lessdot w$ (resp. upper cover relations $w \lessdot x$). By \cite[Prop.~1.4]{DyerCones}, the roots in $B$ are the extreme rays of a polyhedral cone $C^-_w$ (resp. $C^+_w$) in $\mathbb{R}\Phi_{\aff}$, and thus they are in convex position. The positive affine roots can be expressed as $\Phi_{\aff}^+ = \{\alpha +n\delta \mid \alpha \in \Phi_{\fin}^+, \: n \in \mathbb{Z}_{\geq 0}\} \cup \{-\alpha +n\delta \mid \alpha \in \Phi_{\fin}^+, \: n \in \mathbb{Z}_{> 0}\}$, where $\delta$ is the primitive imaginary root. By convexity, for each $\alpha \in \Phi^+_{\fin}$, at most two elements of $B$ are in $\pm \alpha + \mathbb{Z}_{\geq 0} \delta$. Thus $B$ is finite and $|B| \leq 2 |\Phi^+_{\fin}| = |\Phi_{\fin}|$.
\end{proof}

The \emph{ranks} $R_0,\ldots,R_{\ell}$ of a Bruhat interval $[u,v]$ with $\ell(u,v)=\ell$ are $R_i \coloneqq \{x \in [u,v] \mid \ell(u,x)=i\}$; we write $r_i\coloneqq |R_i|$ for the size of the $i$th rank.

\begin{lemma}
\label{lem:no-K23}
Let $r_0,\ldots,r_{\ell}$ be the rank sizes of a length-$\ell$ Bruhat interval $[u,v]$ in a Coxeter group $W$, for $\ell \geq 2$. Then $r_{i+1} \leq r_i(r_i-1)$ and $r_{i-1} \leq r_{i}(r_{i}-1)$ for $1 \leq i \leq \ell-1$.
\end{lemma}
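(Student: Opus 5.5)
The plan is a double count of elements of $R_{i+1}$ against pairs of elements of $R_i$, using two standard facts. The first is that every length-$2$ Bruhat interval is a diamond, i.e.\ has exactly two atoms. The second is that two distinct elements have at most two common lower covers (dually, at most two common upper covers). This second fact is the ``no $K_{2,3}$'' property that the lemma's label refers to.

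First I check that every $z \in R_{i+1}$ covers at least two elements of $R_i$. Since $i \geq 1$, we have $\ell(u,z) = i+1 \geq 2$. So the interval $[u,z]$ contains a length-$2$ subinterval $[y,z]$ with $y \in R_{i-1}$, and that subinterval is a diamond. Hence $z$ has two distinct lower covers lying in $[y,z] \subseteq [u,v]$, and these have rank $i$. I then map $z$ to an unordered pair $\{x,x'\}$ of distinct lower covers of $z$ in $R_i$. This gives a map $R_{i+1} \to \binom{R_i}{2}$.

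Next I bound the fibres of this map. Suppose $z$ lies in the fibre over $\{x,x'\}$. Then $z$ is a common upper cover of $x$ and $x'$, and so it lies in $[u,v]$ automatically. The key input is that two distinct elements $x \neq x'$ of $W$ of the same length have at most two common upper covers. For this I would cite the standard fact that Bruhat graphs, and in particular Hasse diagrams of Bruhat order, contain no $K_{2,3}$ (e.g.\ \cite[\S2.7]{BjornerBrenti2005Combinatorics}, or via Dyer's reflection subgroup theory \cite{Dyer1990ReflectionSubgroups}, which shows that the relevant configuration lives in a dihedral reflection subgroup). Granting it, each fibre has size at most $2$. This gives
\[
r_{i+1} \leq 2\binom{r_i}{2} = r_i(r_i-1).
\]

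The bound $r_{i-1} \leq r_i(r_i-1)$ is the dual statement. Every $y \in R_{i-1}$ has rank $i-1 \leq \ell-2$, so there is a length-$2$ interval $[y,z']$ inside $[y,v]$, which is again a diamond. Thus $y$ has at least two upper covers in $R_i$. Distinct $x,x'$ have at most two common lower covers, again by the no-$K_{2,3}$ property, and the same counting goes through. Alternatively, I could apply the first inequality to the interval $[v^{-1}w_0\ldots]$; but Coxeter groups need not have a longest element, so I will simply run the argument directly rather than rely on duality.

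The main obstacle is the no-$K_{2,3}$ input. If it is not quotable directly, I would derive it from Dyer's result that for $x<z$ with $\ell(x,z)=2$ the interval is a diamond, combined with his theorem that a Bruhat interval is determined inside the reflection subgroup generated by the reflections of its Bruhat graph edges. Concretely, suppose $x,x'$ had three common upper covers $z_1,z_2,z_3$. Then the reflections $xx'^{-1}$ and $z_jx^{-1}$ would generate a dihedral subgroup, because all these elements lie in a single coset of the reflection subgroup generated by two reflections. In a dihedral group each element has at most two upper covers of the same length, which gives the contradiction.
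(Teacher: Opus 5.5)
Your proof is correct and is essentially the paper's argument: the diamond property gives each element of $R_{i+1}$ (resp.\ $R_{i-1}$) at least two lower (resp.\ upper) covers in $R_i$, and the $K_{2,3}$-freeness of the Hasse diagram, which the paper cites as \cite[Thm.~3.2]{BrentiCaselliMarietti2006SpecialMatchings}, bounds each fibre of your map to $\binom{R_i}{2}$ by $2$. Cite only the Hasse-diagram version of that fact, though: the full Bruhat graph does contain $K_{2,3}$ (in $S_3$, both $e$ and $s_1s_2$ are joined to all three reflections), and in your fallback sketch $xx'^{-1}$ is not a reflection, because $\ell(x)=\ell(x')$ forces it to have even length.
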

\begin{proof}
We prove the first inequality, the second being exactly analogous. Let $1 \leq i \leq \ell-1$ and consider the cover relations between $R_i$ and $R_{i+1}$. All elements of $R_{i+1}$ have at least two lower covers in $R_i$, since all length-two Bruhat intervals are diamonds by \cite{Deodhar1977BruhatOrderingMobius}. On the other hand, by \cite[Thm.~3.2]{BrentiCaselliMarietti2006SpecialMatchings} (or as can be seen by reduction to reflection subgroups of rank two) no three distinct elements of $R_{i+1}$ can all cover the same pair of elements of $R_i$. Thus $r_{i+1} \leq 2 \binom{r_i}{2}=r_i(r_i-1)$.
\end{proof}

\begin{remark}
See \cite{BjornerEkedahl2009Shape} for stronger constraints on the rank sizes of \emph{lower} Bruhat intervals $[e,v]$ in crystallographic type.
\end{remark}

\subsection{Proofs of the main theorems} We now apply \Cref{prop:affine-cover-bound} and \Cref{lem:no-K23} to prove \Cref{thm:finitely-many-iso,thm:infinitely-many-iso,thm:linear-coefficients}.

\begin{proof}[Proof of \Cref{thm:finitely-many-iso}]
Fix $k \in \mathbb{N}$, let $W$ be an irreducible affine Coxeter group, and let $u \leq v$ be elements of $W$ with $\ell(u,v)=k$. Let $t_1,\ldots,t_k$ be the reflections labeling the edges of any maximal chain from $u$ to $v$. Then the reflection subgroup $W'=\langle t_1,\ldots,t_k \rangle$ is a Coxeter group by \cite{Dyer1990ReflectionSubgroups} and is a product of finite and affine Coxeter groups of total rank at most $k$ by \cite[Thm.~3]{DyerLehrer2011ReflectionSubgroups}. Moreover, by \cite{Dyer1991BruhatGraph}, there exist $u',v' \in W'$ such that $[u',v']' \cong [u,v]$ as posets, where on the left we use a primed interval to indicate the intrinsic Bruhat order on $W'$.

If $W'$ is reducible or finite, then there are finitely many possibilities for the isomorphism type of $[u',v']'$ by induction on $k$ and Dyer's result \cite[Cor.~2.2]{Dyer1991BruhatGraph} for finite type. Thus we may assume that $W'$ is an irreducible affine Coxeter group of rank at most $k$. Let $r_0,\ldots,r_k$ denote the rank sizes of $[u',v']'$ (or equivalently of $[u,v]$). Then, since there are finitely many finite root systems of a given rank, we have by \Cref{prop:affine-cover-bound} that $r_{k-1} \leq |D(v')| \leq N_k$ where $N_k$ depends only on $k$. Repeated application of \Cref{lem:no-K23} then implies that each of the $r_i$ has an upper bound depending only on $k$. Since there are finitely many graded posets with these bounded rank sizes, the theorem is proved.
\end{proof}

\begin{proof}[Proof of \Cref{thm:infinitely-many-iso}]
Let $W$ be an irreducible, infinite Coxeter group which is not affine. Such a $W$ also has rank at least three, so by \cite[Prop.~6.4]{DyerHohlwegRipoll2016ImaginaryCones} $W$ has a reflection subgroup $W'$ isomorphic to $U_3$, the universal Coxeter group of rank three. Let $s'_1,s'_2,s'_3$ be the reflections in $W$ which are the simple generators in the intrinsic Coxeter structure on $W'$. Since $W'$ is universal, it is clear (as noted in \cite[Ex.~2.7.9]{BjornerBrenti2005Combinatorics}) from the Subword Criterion (see, e.g. \cite[Thm.~2.2.2]{BjornerBrenti2005Combinatorics}) for Bruhat order that for each $m \geq 1$ the interval $[v_{m-1},v_m]'$ is a $3m$-crown (the face poset of a $3m$-gon), where $v_m \coloneqq (s_1's_2's_3')^m$. In particular, we can construct arbitrarily large intervals of $W'$-length three. In terms of the length function on $W$, we have $\ell(v_{m-1},v_m) \leq \ell(s_1')+\ell(s_2')+\ell(s_3')$. Thus, since 
\[
[v_{m-1},v_m]' \subseteq [v_{m-1},v_m]
\]
by \cite[Thm.~1.4]{Dyer1991BruhatGraph}, for some $k \leq \ell(s_1')+\ell(s_2')+\ell(s_3')$ there are arbitrarily large Bruhat intervals of length $k$ in $W$. Since each Bruhat interval is finite, there are infinitely many isomorphism types of intervals in $W$ of this length $k$.
\end{proof}

\begin{proof}[Proof of \Cref{thm:linear-coefficients}]
It is known by \cite{EliasWilliamson2014Hodge} that $[q]P_{u,v} \geq 0$ in any Coxeter group $W$. It is shown in \cite[Main Thm.(i)-(ii)]{Dyer1997CoefficientQ} that $[q]P_{u,v}=c_{u,v} - d_{u,v}$, where $c_{u,v}$ is the number of coatoms of the interval $[u,v]$ and where $d_{u,v}$ is the dimension of a certain vector space $V_{u,v}$. Thus, if $W$ is affine, we have $0 \leq [q]P_{u,v} \leq c_{u,v} \leq |D(v)| \leq |\Phi_{\fin}|$ is bounded on $W$. 

If instead $(W,S)$ is infinite and not affine, consider $v_m$ as defined in the proof of \Cref{thm:infinitely-many-iso}. Dyer (\emph{loc. cit.}) proves that $d_{u,v}$ is always at most the number $a_{u,v}$ of atoms of $[u,v]$. Thus for $u=e$ we have $d_{u,v_m} \leq |S|$, since the atoms of $[e,v_m]$ are a subset of $S$. In particular, the $d_{e,v_m}$ are bounded. On the other hand, we have seen that the intervals $[v_{m-1},v_m]$ grow arbitrarily large while having a bounded length. It thus follows from \Cref{lem:no-K23} that $c_{e,v_m} \geq c_{v_{m-1},v_m}$ grows arbitrarily large. We conclude that $[q]P_{e,v_m}$ is unbounded as $m$ grows.
\end{proof}

\section*{Acknowledgments}
GTB was supported by National Science Foundation grant DMS-2503536. CG was partially supported by NSF grant DMS-2452032 and by a travel grant from the Simons Foundation. CG also thanks Yibo Gao for many fruitful conversations about Bruhat order and affine Weyl groups.

\bibliographystyle{halpha-abbrv}
\bibliography{affine}
\end{document}